\newtheorem{thm}{Theorem}
\newtheorem{prop}{Proposition}
\newtheorem{cor}{Corollary}
\newtheorem{lem}{Lemma}
\theoremstyle{definition}
\newtheorem{dfn}{Definition}
\newtheorem{rem}{Remark}
\def\frak #1{\mathfrak #1}
\newcommand{\Ker}{{\sf Ker}}
\newcommand{\Coker}{{\sf Coker}}
\newcommand{\Hl}{{\sf H}}
\newcommand{\g}{{\frak g}}
\newcommand{\tp}{\otimes}
\newcommand{\e}{\wedge}
\newcommand{\Ulb}{U_{Leib}}
\newcommand{\Ults}{U_{LTS}}
\newcommand{\Ulie}{U_{Lie}}
\newcommand{\Klb}{K_{Leib}}
\newcommand{\Elb}{0 \to K_{Leib} \to U_{Leib} \to \g \to 0}
\newcommand{\Elts}{0 \to K_{LTS} \to U_{LTS} \to \g \to 0}
\newcommand{\Elie}{0 \to K_{Lie} \to U_{Lie} \to \g \to 0}
\newcommand{\chr}{{\sf char\;}}
\def\tx #1{x_{#1}\tp y_{#1}\tp z_{#1}}
\def\bx #1{\{x_{#1}, y_{#1}, z_{#1}\}}
\newcounter{ProbNumb}
\newcounter{nmr}
\def\numr{\addtocounter{nmr}{1}\noindent{\it (\arabic{nmr}) }}
\newcounter{ab}
\title{Lie triple system central extensions of Lie algebras}
\author{ R. Kurdiani }
\begin{document}

\maketitle

\section*{Introduction.}

\noindent The present paper deals with a certain interrelationship between Lie algebras, Lie triple systems and Leibniz algebras which shows up through the notion of universal central extension.

Lie algebras are classical algebraic structures very widely used in mathematics and physics. Another, less widely used but equally classical structures are Lie triple systems (see \cite{LTS1,LTS2,LTS3,LTS4}). They are also used in different areas of mathematics. Leibniz algebra is yet another algebraic structure gaining increasing importance. It is a sort of ``noncommutative'' generalization of the Lie algebra structure recently introduced in \cite{LeibnizAlgebras}. Original reason for introduction of Leibniz algebras was a new cohomology theory (see \cite{L,LP}). Later, the notion of Leibniz algebra found other applications in different areas
of mathematics.

It is well known that on each Lie algebra one may define canonically the structure of a Lie triple system. We use Leibniz algebras to study Lie triple systems arising in this way, in a similar spirit to \cite{KP} where Leibniz algebras were applied to the study of Lie algebras.

Namely, it is known that each Lie algebra is also a particular case of a Leibniz algebra. Thus speaking of universal central extensions, each perfect Lie algebra $\g$, along with its universal central extension as a Lie algebra $U_{Lie}(\g)$, also admits a \emph{Lie triple system universal central extension} $U_{LTS}(\g)$ (if we consider $\g$ as a Lie triple system) as well as the \emph{Leibniz universal central extension} $U_{Leib}(\g)$ (if we consider $\g$ as a Leibniz algebra).

Now a nice unexpected new fact is that the Lie triple system $U_{LTS}(\g)$ turns out to carry a natural Leibniz algebra structure (see Lemma \ref{LemmaLeibnStr}). This fact allows us to use properties of Leibniz and Lie algebras for Lie triple systems. Namely, the main purpose of the present article is to prove the following theorem:

\begin{thm}\label{thmMain}
For a perfect Lie algebra $\g$ over a field $F$, we have the isomorphisms
$$ \Ults(\g)\cong \Ulb(\g), \ \ \ if \ \chr F = 2, $$
$$ \Ults(\g)\cong \Ulie(\g), \ \ \ if \ \chr F \ne 2.$$
In other words, the universal central extension of $\g$
in the category of Lie triple systems is isomorphic either
to the universal central extension of $\g$ as a Lie algebra (if $\chr F \ne 2$)
or to the universal central extension of $\g$ as a Leibniz algebra (if $\chr F = 2$).
\end{thm}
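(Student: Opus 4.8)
The plan is to compare the three universal central extensions through the extra Leibniz structure that Lemma~\ref{LemmaLeibnStr} places on $\Ults(\g)$, and then to let the characteristic decide which symmetrization of the bracket survives. Recall that every Leibniz algebra (in particular every Lie algebra) becomes a Lie triple system under the derived triple product $[[x,y],z]$, and that a central extension in a finer category is automatically a central extension in any coarser one. Hence $\Ulie(\g)$ and $\Ulb(\g)$ are Lie triple system central extensions of $\g$, while $\Ults(\g)$ --- being a Leibniz algebra with central kernel by the Lemma --- is a Leibniz central extension of $\g$. Feeding these observations into the three universal properties produces canonical structure maps over $\g$: a Leibniz morphism $\alpha\colon\Ulb(\g)\to\Ults(\g)$ and Lie triple system morphisms $\beta\colon\Ults(\g)\to\Ulb(\g)$ and $\gamma\colon\Ults(\g)\to\Ulie(\g)$. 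Since every Leibniz morphism is in particular a morphism of the derived triple systems, $\alpha\circ\beta$ is an endomorphism of $\Ults(\g)$ over $\g$ in the Lie triple system category, so uniqueness in the universal property forces $\alpha\circ\beta=\mathrm{id}$. Thus $\beta$ is a split injection and $\alpha$ a split surjection: $\Ults(\g)$ is canonically a quotient of $\Ulb(\g)$, and $\Ker\alpha$ is a central Leibniz ideal contained in $\Klb$.

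The entire dichotomy is then governed by the failure of antisymmetry of the Leibniz bracket on $\Ults(\g)$, that is, by the symmetric elements $s(x,y):=[x,y]+[y,x]$. Because the bracket descends to the Lie bracket of $\g$, each $s(x,y)$ lies in the central kernel $\Klts$, and the general Leibniz identity already makes it act trivially. I would read off the defining relations of $\Ults(\g)$ from the standard Lie triple system chain complex in low degrees and show that the $s(x,y)$ span exactly the discrepancy between the Leibniz and the Lie models: at the level of generators $x\tp y$ this discrepancy is the image of the symmetric square $S^2\g\subset\g\tp\g$, as opposed to the antisymmetric part $\g\e\g$ that the Lie model retains. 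The only characteristic-sensitive relation that arises is the identity $2[x,y]=s(x,y)+\bigl([x,y]-[y,x]\bigr)$, which trades the symmetric element against $2$ times the bracket modulo relations already present.

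In characteristic $\ne2$ I would invert $2$ in this identity to solve for $s(x,y)$ in terms of the antisymmetrized bracket and the triple relations, forcing $s(x,y)=0$ on generators and hence everywhere. The Leibniz bracket on $\Ults(\g)$ is then antisymmetric, so $\Ults(\g)$ is a perfect Lie algebra. Every Lie central extension of $\Ults(\g)$ is in particular a Lie triple system central extension, which splits by the Lie triple system universal property; checking that the splitting is compatible with the (now antisymmetric) bracket shows $\Ults(\g)$ is centrally closed as a Lie algebra, hence is a universal Lie central extension, and uniqueness of universal central extensions gives $\Ults(\g)\cong\Ulie(\g)$. In characteristic $2$ no such cancellation is available: symmetric and antisymmetric parts coincide and the $s(x,y)$ survive. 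Here I would instead show directly that the surjection $\alpha$ is injective. Its kernel $\Ker\alpha\subseteq\Klb$ is spanned by the relations that the Lie triple system complex imposes beyond the Leibniz complex, and I would verify that, modulo $2$, each of these is already a consequence of the Leibniz relations; thus $\Ker\alpha=0$ and $\Ults(\g)\cong\Ulb(\g)$.

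The main obstacle is precisely this last matching of defining relations: identifying the low-degree part of the Lie triple system standard complex with the symmetric-versus-antisymmetric splitting of $\g\tp\g$, and proving that the sole characteristic-dependent relation is the one trading $s(x,y)$ for $2[x,y]$. Showing that $s(x,y)=0$ upgrades the Leibniz identity to the full Jacobi identity when $\chr F\ne2$ --- rather than leaving a genuinely Leibniz object --- and, dually, that no new central classes appear when $\chr F=2$, is the crux. It requires an explicit comparison of the three standard complexes in degrees up to two together with careful bookkeeping of the Jacobi and Leibniz relations among the generators $x\tp y$.
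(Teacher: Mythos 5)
Your overall architecture --- use Lemma \ref{LemmaLeibnStr} to make $\Ults(\g)$ a Leibniz central extension of $\g$, obtain a canonical surjection from $\Ulb(\g)$, and then let the characteristic decide whether the symmetric elements $s(x,y)=[x,y]+[y,x]$ survive --- is the same as the paper's. However, there are two genuine gaps.

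First, your map $\beta\colon\Ults(\g)\to\Ulb(\g)$ rests on the claim that every Leibniz algebra becomes a Lie triple system under the derived triple product. This is false, and the paper says so explicitly: the derived ternary bracket satisfies the cyclic identity only if the Jacobi identity holds, and in $\Ulb(\g)$ it does not in general --- its failure is measured exactly by the central subspace $J$ spanned by the Jacobi elements, which can be nonzero. So the universal property of $\Ults(\g)$ gives a map to $\Ulb(\g)/J$, not to $\Ulb(\g)$. The retraction argument survives with this corrected target and yields $\Ults(\g)\cong\Ulb(\g)/J$, which is what the paper proves; but as written, $\beta$ does not exist.

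Second, and more seriously, the quantitative heart of the theorem is missing. Your ``only characteristic-sensitive relation,'' $2[x,y]=s(x,y)+([x,y]-[y,x])$, is a tautology valid in any algebra; it relates $s(x,y)$ to nothing and cannot force $s(x,y)=0$ when $\chr F\ne 2$, nor show that no extra relations appear when $\chr F=2$. What is actually needed is the identity, valid in $\Ulb(\g)$ because the symmetric elements are central and the Leibniz identity holds,
$$[x,[y,z]]+[y,[z,x]]+[z,[x,y]]=2\bigl([x,[y,z]]+[[y,z],x]\bigr),$$
which together with perfectness of $\Ulb(\g)$ gives $J=2I$ (the paper's Lemma \ref{lem_j=2i}). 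From this, $J=0$ in characteristic $2$ (so $\Ults\cong\Ulb$) and $J=I$ in characteristic $\ne 2$ (so $\Ults\cong\Ulb/I\cong\Ulie$). Your proposal defers precisely this computation to an unspecified comparison of standard complexes in low degrees and a ``careful bookkeeping'' that is never carried out, so the dichotomy is asserted rather than proved; the subsequent ``centrally closed'' argument in characteristic $\ne 2$ also depends on the unestablished antisymmetry.
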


In the section \ref{SectPrelim} we recall some well known facts about
Lie algebras, Leibniz algebras and Lie triple systems. In particular, in \ref{sectNATP}
we introduce non-abelian tensor product of Lie triple systems and in \ref{sectUCE}
we give general definitions and standard facts about universal central extensions
of Lie triple systems. The key lemma about natural Leibniz algebra structures on Lie triple system central extensions of perfect Lie algebras is proved in the section \ref{sectMain}. Finally, using this lemma, we obtain the proof of the main theorem (Theorem \ref{thmMain}) in section \ref{pfMain}.

Throughout this paper $F$ is a field. All vector spaces and
homomorphisms are considered over $F$ if not specified otherwise.

\section*{Acknowledgement.}

\noindent The problem solved in the article was proposed by T. Pirashvili in
a friendly conversation.

\section{Definitions and Constructions.}\label{SectPrelim}

\subsection{Recollections} Let us begin by recalling some well known definitions and
facts. Namely, we discuss two generalizations of the notion of a Lie
algebra --- the notions of a Lie triple system and a Leibniz algebra.
We also study a relationship between these two generalizations.

The notion of a Lie triple system is a classical one. The definition of Lie triple
systems can be found in many articles, but for the convenience of the reader we recall it here.

\begin{dfn}
A {\it Lie triple system} is a vector space $L$ equipped with a
ternary bracket
$$\{-,-,-\} : L\tp L\tp L\to L,$$
satisfying the following
$$ \{x,y,y\}=0, $$
$$ \{x,y,z\}+\{y,z,x\}+\{z,x,y\}=0, $$
$$ \{\{x,y,z\},a,b\}=\{\{x,a,b\},y,z\}+\{x,\{y,a,b\},z\}+\{x,y,\{z,a,b\}\}, $$
for all $x,y,z,a,b\in L$.
\end{dfn}

Any Lie algebra $\g$ can be considered as a Lie triple system.
Indeed, one can define a ternary bracket $\{-,-,-\}:\g\tp\g\tp\g\to\g$
in the following way:
$$
\{x,y,z\}=[x,[y,z]].
$$
Then $\g$ with the bracket $\{-,-,-\}$ is a Lie triple system.

An {\it ideal} in a Lie triple system $L$ is a subspace $L'$ such
that the bracket $\{x,y,z\}$ ($x,y,z\in L$) is in $L'$ whenever at
least one of the elements $x,y,z$ is in $L'$.

A Lie triple system $L$ is called {\it perfect} if $L=\{L,L,L\}$.

Leibniz algebras were introduced recently in \cite{LeibnizAlgebras}, but
they already show up in many areas of mathematics. One of the
aspects of their importance is the fact that they can be used to
study other algebraic objects. In this article we use Leibniz
algebras to study Lie triple systems and in \cite{KP} they were used
to study Lie algebras. Let us recall the definition of a Leibniz
algebra.

\begin{dfn}
A {\it Leibniz algebra} is a vector space $\frak h$ equipped with a
binary bracket
$$[-,-]:\frak h\otimes \frak h\to\frak h$$
satisfying the Leibniz identity
$$ [x,[y,z]]=[[x,y],z]-[[x,z],y], $$
for all $x,y,z\in\frak h$.
\end{dfn}

Obviously, any Lie algebra is a Leibniz algebra and conversely a
Leibniz algebra $\g$ is a Lie algebra provided $[x,x]=0$ for all
$x\in\frak g$. Note that, in this case, the Leibniz identity is
equivalent to the Jacobi identity
$$ [x,[y,z]]+[y,[z,x]]+[z,[x,y]]=0.$$


For any Leibniz algebra, one can define a ternary bracket as it was
defined above for Lie algebras. In general, we do not get a Lie
triple system in this way. To get a Lie triple system in this way, it
is necessary and sufficient that the Jacobi identity be satisfied
in the Leibniz algebra. Such Leibniz algebras will be studied in what follows.

A beauty of Leibniz algebras is shown in the following example.
Namely, one can define a Leibniz algebra structure (but not a Lie
algebra structure) on second tensor and exterior powers of a Leibniz
algebra or a Lie algebra. In fact, let $\frak g$ be a Leibniz
algebra (in particular a Lie algebra). The Leibniz algebra structure
on $\frak g\otimes\frak g$ can be defined by
$$ [x\tp y,a\tp b]= [x,[a,b]] \tp y   +  x\tp [y,[a,b]], \ \ \ x,y,a,b\in \frak g.$$
The Leibniz algebra structure on $\frak g\e\frak g$ is defined
similarly, simply replace $\tp$ by $\e$ in the above formula to get
a bracket on $\frak g\e\frak g$.

These constructions can be generalized to Lie triple systems. Let
$L$ be a Lie triple system. The brackets on $L\tp L$ and $L\e L$ are
given by
$$ [x\tp y,a\tp b]= \{x,a,b\} \tp y   +  x\tp \{y,a,b\}, \ \ \ x,y,a,b\in L,$$
$$ [x\e y,a\e b]= \{x,a,b\} \e y   +  x\e \{y,a,b\}, \ \ \ x,y,a,b\in L.$$
With these brackets $L\tp L$ and $L\e L$ are Leibniz algebras.

To go further we need the definition of an action of a Leibniz algebra
on a Lie triple system.

We say that a Leibniz algebra $\g$ {\it acts} on a Lie triple system
$L$ if we are given a map
$$L\tp\g\to L, \ x\tp g\mapsto x*g, $$
satisfying the following relations
$$(x*g)*h-(x*h)*g=x*[g,h],$$
$$\{x,y,z\}*g=\{x*g,y,z\}+\{x,y*g,z\} +\{x,y,z*g\},$$
for all $x,y,z\in L$ and $g,h\in\g$.

For any Lie triple system $L$, one can define an action of the
Leibniz algebra $L\e L$ on the Lie triple system $L$ (in the above
sense) by
$$x*(y\e z)=\{x,y,z\},$$
where $x,y,z\in L$.

Before moving to the non-abelian tensor product of Lie triple systems, let us mention an important construction
which allows us to construct a Leibniz algebra structure on a vector space in some circumstances.

Let $\g$ be a Lie algebra and let $M$ be a $\g$-module. Assume
that we are given a map of $\g$-modules $f:M\to\g$, where $\g$ is
considered as a $\g$-module via the adjoint representation. We
call such a map a {\it $\g$-equivariant map}. Then the bracket on
$M$ defined by
$$[m,n]=m*f(n),\ m,n\in M$$
gives us a Leibniz algebra structure on $M$.

\subsection{A non-abelian tensor product for Lie triple systems.}\label{sectNATP}

\noindent Let us introduce a non-abelian tensor product for Lie
triple systems. This construction allows us to describe
universal central extensions of Lie triple systems explicitly.

Let $L$ be a Lie triple system. One can equip $L\tp L\tp L$ with a
ternary bracket defined by
$$\{\tx1,\tx2,\tx3\}=\bx1\tp\bx2\tp\bx3,$$
where $x_i,y_i,z_i\in L$ for all $i=1,2,3$.

Note that this bracket does not determine a Lie triple system
structure. But one can take a quotient of $L\tp L\tp L$ to get a Lie
triple system. Let $I$ be the subspace of $L\tp L\tp L$ spanned by
the elements of the forms
$$ x\tp y\tp y, $$
$$ x\tp y\tp z + y\tp z\tp x + z\tp x\tp y ,  $$
$$ \{x,a,b\}\tp y\tp z + x\tp \{y,a,b\}\tp z + x\tp y\tp \{z,a,b\} - \{x,y,z\}\tp a\tp b  ,$$
where $x,y,z,a,b\in L$. One can see immediately that $I$ is an
ideal of $L\tp L\tp L$ and consequently the bracket introduced
above determines a bracket on the quotient $L\tp L\tp L/I$. One
can straightforwardly check that the quotient with this bracket is
a Lie triple system. We call this Lie triple system the
non-abelian tensor cube of the Lie triple system $L$ and denote it by
$L*L*L$.

Note that, the original bracket of $L$ determines a map of Lie
triple systems
$$b:L*L*L \to L.$$
We define the first homology group of $L$ (with trivial coefficients) as the
cokernel of the map $b$ and the second homology group of $L$ (again with trivial coefficients) as the kernel of the map $b$. So, we have
$$\Hl_1(L)=\Coker(b), $$
$$\Hl_2(L)=\Ker(b).$$
Explicitly
$$\Hl_1(L)=L/\{L,L,L\}, $$
$$\Hl_2(L)= \Ker (\{-,-,-\})/I,$$
where $\{-,-,-\}:L\tp L\tp L\to L$ is the original bracket of $L$
and $I$ is the subspace of $L\tp L\tp L$ defined above.

\subsection{Universal central extensions of Lie triple systems.}\label{sectUCE}

\noindent Next, let us study the universal central extension of a Lie
triple system. Let us start with the definition of central extensions of Lie
triple systems.

\begin{dfn}
A {\it central extension of a Lie triple system} $\frak g$ is a
short exact sequence of Lie triple systems
$$0\to K\to L\to \g \to 0, $$
where $\{L,L,K\}=0$.
\end{dfn}

Obviously, in a central extension we have $\{L,K,L\}=\{K,L,L\}=0 $.


In a usual way one can define the universal central extension of a
Lie triple system and prove the following

\begin{prop}
Let $L$ be a Lie triple system. The universal central extension of
$L$ exists if and only if $L$ is perfect.
\end{prop}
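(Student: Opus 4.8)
The plan is to prove the classical "existence iff perfect" equivalence in the Lie triple system setting, following the standard pattern used for Lie algebras, Leibniz algebras and similar varieties.

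The plan is to prove both implications of this classical criterion, adapting to Lie triple systems the standard strategy used for Lie algebras and similar varieties. \emph{Necessity.} Suppose a universal central extension $(U,\pi)$ of $L$ exists; first I would show that $U$ is perfect. If it were not, set $W=U/\{U,U,U\}\neq 0$ and form the central extension $E=U\oplus W\to L$ with the degenerate bracket $\{(u_1,w_1),(u_2,w_2),(u_3,w_3)\}=(\{u_1,u_2,u_3\},0)$ and projection $(u,w)\mapsto\pi(u)$; this is central because $\{U,U,\Ker\pi\}=0$. Then $u\mapsto(u,0)$ and $u\mapsto(u,\bar u)$, where $\bar u$ denotes the class of $u$ in $W$, are two Lie triple system homomorphisms over $\pi$ that differ since $W\neq 0$, contradicting uniqueness. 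Hence $U=\{U,U,U\}$, and since $\pi$ is onto, $L=\{L,L,L\}$ is perfect as well.

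\emph{Sufficiency.} Assume $L$ is perfect and take $U=L*L*L$ together with the bracket map $b:U\to L$ from the previous subsection. Perfectness gives $\Hl_1(L)=0$, so $b$ is surjective; moreover $U$ is itself perfect, since writing $x=\{x_1,x_2,x_3\}$, $y=\{y_1,y_2,y_3\}$, $z=\{z_1,z_2,z_3\}$ exhibits any generator as $\overline{x\tp y\tp z}=\{\overline{x_1\tp y_1\tp z_1},\overline{x_2\tp y_2\tp z_2},\overline{x_3\tp y_3\tp z_3}\}$. Granting that $0\to\Hl_2(L)\to U\xrightarrow{b}L\to 0$ is central, universality follows in the usual manner: for any central extension $p:E\to L$ choose a linear section $s$ and set $\phi(x\tp y\tp z)=\{s(x),s(y),s(z)\}$. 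Centrality of $E$ renders $\phi$ independent of $s$, and the three defining identities of a Lie triple system make $\phi$ annihilate the three families of generators of $I$, so $\phi$ descends to a homomorphism $f:U\to E$ with $pf=b$. Two such lifts differ by a map into the central $\Ker p$, hence agree on $\{U,U,U\}=U$, giving uniqueness.

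The main obstacle is precisely the centrality $\{U,U,\Ker b\}=0$. In the Lie algebra analogue it is immediate: for $\xi=\sum_i x_i\e y_i$ with $\sum_i[x_i,y_i]=0$ the bracket $[\xi,c\e d]=\sum_i\big([[x_i,y_i],c]\e d+c\e[[x_i,y_i],d]\big)$ factors through $\sum_i[x_i,y_i]$ and so vanishes. The ternary situation is harder, because no such factorization through $b(\xi)$ is available: for $\xi=\sum_i x_i\tp y_i\tp z_i$ with $\sum_i\{x_i,y_i,z_i\}=0$ and generators $p\tp q\tp r$, $s\tp t\tp u$ one finds $\{\xi,\overline{p\tp q\tp r},\overline{s\tp t\tp u}\}=\sum_i\{x_i,p,s\}\tp\{y_i,q,t\}\tp\{z_i,r,u\}$, where the three tensor slots are acted on by different elements. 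The remaining work is to show this lies in $I$, by invoking perfectness of $U$ to reduce $\overline{p\tp q\tp r}$ and $\overline{s\tp t\tp u}$ to brackets and then repeatedly applying the third defining relation of $I$ (equating $\{x,a,b\}\tp y\tp z+x\tp\{y,a,b\}\tp z+x\tp y\tp\{z,a,b\}$ with $\{x,y,z\}\tp a\tp b$ modulo $I$) together with the symmetry relations, until the factor $\sum_i\{x_i,y_i,z_i\}=0$ can be extracted. I expect this bookkeeping, not any conceptual subtlety, to be the crux.
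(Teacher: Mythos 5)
The paper does not actually prove this proposition --- it is stated as a standard fact (``in a usual way one can define \dots and prove the following''), so there is no in-text argument to compare against. Your overall strategy is the standard one, and the necessity direction (the split central extension $U\oplus U/\{U,U,U\}\to L$ admitting two distinct lifts) is complete and correct. The sufficiency direction is also structurally right: surjectivity of $b$ from $\Hl_1(L)=0$, perfectness of $L*L*L$, the lift $\phi(x\tp y\tp z)=\{s(x),s(y),s(z)\}$ killed on the three generator families of $I$ by the three Lie triple system axioms, and uniqueness from perfectness.

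However, you leave the centrality of $\Ker b$ unproven and flag it as the crux, and the difficulty you describe comes from a misreading of the bracket on $L\tp L\tp L$. The paper defines
$$\{x_1\tp y_1\tp z_1,\; x_2\tp y_2\tp z_2,\; x_3\tp y_3\tp z_3\}=\{x_1,y_1,z_1\}\tp\{x_2,y_2,z_2\}\tp\{x_3,y_3,z_3\},$$
i.e.\ the $i$-th tensor factor of the output is the original bracket applied to the three components of the $i$-th \emph{argument}. Extended multilinearly, the bracket of $\alpha,\beta,\gamma\in L\tp L\tp L$ is simply $b(\alpha)\tp b(\beta)\tp b(\gamma)$, so it factors through $b$ in every slot and vanishes the moment one argument lies in $\Ker b$ --- exactly the ``immediate'' factorization you describe for the Lie algebra case. (This reading is also forced by the requirement that $b$ be a homomorphism of ternary brackets.) Your formula $\sum_i\{x_i,p,s\}\tp\{y_i,q,t\}\tp\{z_i,r,u\}$ corresponds to the transposed pairing, which is not the paper's definition and would not even make $b$ a homomorphism. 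So the step you defer as hard bookkeeping is in fact trivial; as written, though, your proof of sufficiency is incomplete because that step is asserted to be open rather than carried out.
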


An explicit construction of the universal central extension is
obtained via the non-abelian tensor product which was introduced
in the previous subsection.

Let $L$ be a perfect Lie triple system. Then
$$ b:L*L*L\to L $$
is the universal central extension of $L$.

This construction shows that the kernel of the universal central
extension of $L$ is the second homology group with trivial coefficients
$\Hl_2(L)$.

\

\

One can thus consider a perfect Lie algebra $\g$ as a Lie triple system or as a Leibniz algebra and construct the
appropriate universal central extensions. One obtains three universal central extensions for each
perfect Lie algebra $\g$. Namely, let
\begin{equation}\label{equUCELeibniz}
\Elb
\end{equation}
be the universal central extension of Leibniz algebra $\g$,
\begin{equation}\label{equUCELTS}
\Elts
\end{equation}
be the universal central extension of Lie triple system $\g$ and
\begin{equation}\label{equUCELie}
\Elie
\end{equation}
be the universal central extension of Lie algebra $\g$.

It is obvious that the universal central extension of the Lie algebra $\g$ is a quotient
of the universal central extension of the Leibniz algebra (or the
Lie triple system) $\g$. There is however further unexpected relationship between these universal central extensions, which is the subject of the next section.

\section{Central Extensions of Lie Algebras Considered as Lie Triple Systems.}\label{sectMain}

\noindent In this section we work with central extensions of perfect Lie algebras considered as Lie triple systems. Here we prove a lemma which is the key to our main result. Namely, surprisingly enough, one can construct a Leibniz algebra
structure on the Lie triple system universal central extension of a perfect Lie algebra $\g$ viewed as a Lie triple system and consider it as a central extension of Leibniz algebras (see Lemma \ref{LemmaLeibnStr} below). So, the universal central extension of the Lie triple system $\g$ is a quotient of
the universal central extension of the Leibniz algebra $\g$. This
leads us to the description of the universal central extension of
the Lie triple system $\g$ via the universal central extension of
the Leibniz algebra $\g$. But the universal central extensions of
Leibniz algebras are studied very well and thus we can use this
information. The precise formulation of these facts are given in
the following lemma, which as a corollary will yield our main theorem.

\begin{lem}\label{LemmaLeibnStr}
Let $\g$ be a perfect Lie algebra and let
$$0\to K\to L\to \g \to 0 $$
be a central extension of the Lie triple system $\g$. Then there
is a Leibniz algebra structure on $L$
$$[-,-]:L\tp L\to L,$$
such that the original Lie triple system structure on $L$ is given
by the Leibniz bracket
$$ \{x,y,z\}=[x,[y,z]], \ \forall\ x,y,z\in L.$$
\end{lem}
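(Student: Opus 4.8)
The plan is to construct the Leibniz bracket on $L$ explicitly by lifting a natural bracket through the central extension. Since $\g$ is perfect, the Lie triple system $L$ is perfect as well (the extension is central, so $L = \{L,L,L\} + K$ forces $L = \{L,L,L\}$ because $\g$ is perfect), and this perfectness will be essential for uniqueness arguments. Let me first recall the structural ingredients available: for the Lie triple system $L$ we have the Leibniz algebra $L\e L$ acting on $L$ via $x*(y\e z) = \{x,y,z\}$, and the construction converting a $\g$-equivariant map $f\colon M\to\g$ into a Leibniz bracket $[m,n]=m*f(n)$. The strategy is to realize $L$ itself as such a module $M$ with an appropriate equivariant map.

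First I would define a candidate bracket directly on $L$. The intended relation $\{x,y,z\}=[x,[y,z]]$ suggests that I should first produce an auxiliary bracket on $\g$ and then lift. Since $\g$ is a Lie algebra, it already carries its Lie bracket $[-,-]_\g$, and its Lie triple system structure is $\{x,y,z\}_\g=[x,[y,z]_\g]_\g$. So on the quotient the compatibility already holds. The real content is to transport a bracket up to $L$ so that it is well-defined modulo the central kernel $K$ and still satisfies the Leibniz identity. I would attempt to define $[x,y]$ for $x,y\in L$ by choosing, for the second argument, a way to encode $y$ as an element acting on $L$; concretely, I would try to express the desired bracket through the action of $L\e L$ and the triple bracket, seeking a formula of the shape $[x,y] = x * \omega(y)$ where $\omega\colon L \to L\e L$ is a suitable map whose image acts correctly, or equivalently exhibiting $L$ as a module over a Lie algebra built from $\g$ together with an equivariant map into it.

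The key steps, in order, would be: (1) verify that $L$ is perfect and record the consequences $\{L,L,K\}=\{L,K,L\}=\{K,L,L\}=0$; (2) construct the candidate binary bracket on $L$, checking that it is bilinear and well-defined (independent of any choices made in lifting from $\g$, which is where centrality of $K$ is used to absorb ambiguities); (3) verify the defining Leibniz identity $[x,[y,z]]=[[x,y],z]-[[x,z],y]$ by reducing it, via the action axioms $(x*g)*h-(x*h)*g=x*[g,h]$ and $\{x,y,z\}*g=\{x*g,y,z\}+\{x,y*g,z\}+\{x,y,z*g\}$, to identities that hold in the Lie triple system $L$; and (4) confirm the compatibility $\{x,y,z\}=[x,[y,z]]$ by direct comparison with the triple bracket, so that the Leibniz structure genuinely recovers the given Lie triple system structure.

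The main obstacle I anticipate is step (2)–(3): making the binary bracket well-defined and simultaneously Leibniz. The natural formula coming from the Lie algebra $\g$ lives upstairs only after a choice of set-theoretic (or vector-space) lift of the projection $L\to\g$, and a priori the bracket $[x,y]$ could depend on this lift by elements of $K$. The heart of the argument is to show these ambiguities are killed: that the correction terms all land in $K$ and that because the bracket $[x,[y,z]]$ only ever appears composed into the triple bracket (and $\{L,L,K\}=0$), the dependence on lifts disappears when one verifies the Leibniz identity. I expect the cleanest route is to phrase the whole construction through the equivariant-map machinery recalled in the preliminaries, treating $L$ as a $\g$-module via the action and producing a $\g$-equivariant map $L\to\g$ whose induced bracket $[m,n]=m*f(n)$ is automatically Leibniz by the cited construction; then the only remaining verification is the compatibility identity, which should follow formally.
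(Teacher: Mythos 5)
Your final instinct is the right one and is exactly the paper's approach: make $L$ a $\g$-module, observe that the projection $p\colon L\to\g$ is $\g$-equivariant, and let the general construction $[m,n]=m*p(n)$ produce the Leibniz bracket, after which $\{x,y,z\}=[x,[y,z]]$ falls out of $x*[p(y),p(z)]=x*(y\e z)=\{x,y,z\}$. But the one step that carries all the content is left as an unexecuted ``expectation'': you must actually produce the $\g$-action on $L$. The paper does this by showing that the given action of the Leibniz algebra $L\e L$ on $L$ descends along the composite $L\e L\to\g\e\g\to\g$ (surjective because $\g$ is perfect), i.e.\ that its kernel $Z$ acts trivially on $L$. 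That argument assembles three observations which you partly gesture at but never put together: $a*z\in K$ for all $z\in Z$ and $a\in L$; every $x\in L$ decomposes as $k+y$ with $k\in K$ and $y\in\{L,L,L\}$ (again by perfectness of $\g$); the action of $L\e L$ on $K$ is trivial; and then the derivation axiom pushes $*z$ inside the triple bracket, where $\{a*z,b,c\}$, $\{a,b*z,c\}$, $\{a,b,c*z\}$ all vanish by centrality. Without this descent there is no $\g$-module structure to feed into the equivariant-map lemma, so the proposal as written does not yet constitute a proof.

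Two further problems. First, your parenthetical claim that $L$ is perfect is false: $L=\{L,L,L\}+K$ does not force $L=\{L,L,L\}$ (take $L=\g\oplus K$ with the bracket vanishing whenever an argument lies in $K$); fortunately only the decomposition $L=K+\{L,L,L\}$ is needed, not perfectness of $L$. Second, the route you spend most of the proposal on --- choosing a lift of the Lie bracket of $\g$ through $p$ and arguing that the $K$-ambiguities ``disappear when one verifies the Leibniz identity'' --- does not work as stated: the Leibniz identity is an identity in $L$, not in $L/K$, and it involves the term $[[x,y],z]$, which is not of the form $\{x,y,z\}=[x,[y,z]]$, so central correction terms are not absorbed by $\{L,L,K\}=0$. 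The point of the paper's construction is precisely that no lift is chosen: once the action descends, $[x,y]=x*p(y)$ is canonical and the Leibniz identity is automatic from the equivariant-map construction recalled in the preliminaries.
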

\begin{proof}
We have to construct a Leibniz algebra structure on the Lie triple
system $L$. First we construct a $\g$-module structure on $L$,
such that the map $L\to\g$ is a $\g$-equivariant map. Then the
desired Leibniz algebra structure can be constructed as it was
described in Section \ref{SectPrelim}.

It is well known that $L\e L$ is a Leibniz algebra and acts on $L$
(see Section \ref{SectPrelim}). Since $\g$ is a perfect Lie
algebra, it can be considered as a homomorphic image of the
composition map $L\e L\to\g\e\g\to\g$. Let us denote by $Z$ the
kernel of this map. We want to show that $Z$ acts trivially on
$L$. One can easily verify that the following statements hold:

\numr for any elements $z\in Z$ and $a\in L$ we have $a*z\in K$,
where $*$ denotes the action of $L\e L$ on $L$;

\numr any element $x\in L$ can be represented as a sum of elements
$x=k+y$, where $k\in K$ and $y\in\{L,L,L\}$;

\numr the action of $L\e L$ (and in particular the action of $Z$) on
$K$ is trivial.

\noindent Thus, we need to show that $Z$ acts trivially on
$\{L,L,L\}$. This follows from the following:
$$
\begin{array}{rl}
\{a,b,c\}*z  =  &   \{a*z,b,c\}+\{a,b*z,c\}+\{a,b,c*z\}   \\
           =  &   \{k_1,b,c\}+\{a,k_2,c\}+\{a,b,k_3\}   \\
           =  &   0,                              \\
\end{array}
$$
where $k_1=a*z,k_2=b*z,k_3=c*z\in K$.

So, the action of $L\e L$ on $L$ factors through $\g$. The
desired Leibniz algebra structure on $L$ is given by the
$\g$-equivariant map $L\to\g$.

The identity
$$ \{x,y,z\}=[x,[y,z]]$$
follows immediately from the definition of the Leibniz algebra
structure on $L$.
\end{proof}

\begin{rem}\label{remLeibnStr}
Note that the Leibniz bracket which appeared in Lemma \ref{LemmaLeibnStr}
satisfies the Jacobi identity.
\end{rem}

\section{Proof of the Main Theorem}\label{pfMain}

\noindent Let $J$ be the linear subspace of $\Ulb$ generated by the elements of
the form
\begin{equation}\label{equJacobiElmts}
[x,[y,z]] + [z,[x,y]] + [y,[z,x]],  \ x,y,z\in \Ulb
\end{equation}
and let $I$ be the linear subspace of $\Ulb$ generated by the
elements of the form
$$[x,y]+[y,x], \ x,y\in \Ulb .$$
It is obvious that $I$ and $J$ are subspaces of $\Klb$ and
consequently ideals of $\Ulb$. Since the Jacobi identity is equivalent
to the Leibniz identity modulo $I$ and Leibniz identity holds in $\Ulb$,
one can immediately conclude that $J\subset I$.
More precise description is given by the following lemma.

\begin{lem}\label{lem_j=2i}
The equation
$$ J = 2 I$$
holds.
\end{lem}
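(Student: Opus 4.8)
The plan is to isolate the single way in which $\Ulb$ fails to be a Lie algebra and then push it through the Leibniz identity. The starting observation is that $I$ and $J$ both lie in the center $\Klb$, so that every symmetrizer $[x,y]+[y,x]$, being a generator of $I\subseteq\Klb$, is annihilated by every bracket. Bracketing such a symmetrizer on the right by an arbitrary $z\in\Ulb$ yields
$$[[x,y],z]=-[[y,x],z],\qquad x,y,z\in\Ulb,$$
that is, the iterated bracket $[[x,y],z]$ is already alternating in its first two arguments inside $\Ulb$. This single relation is the lever that turns the expected ``factor of two'' into a precise statement.

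First I would rewrite the generators of $J$. Expanding each term of the Jacobiator $[x,[y,z]]+[y,[z,x]]+[z,[x,y]]$ by the Leibniz identity produces six iterated brackets; folding the three terms that occur in the ``wrong'' order by means of the alternating relation above collapses the sum to
$$[x,[y,z]]+[y,[z,x]]+[z,[x,y]]=2\bigl([[x,y],z]+[[y,z],x]+[[z,x],y]\bigr).$$
Writing $c(x,y,z)$ for the cyclic sum on the right, this shows that every generator of $J$ is exactly $2\,c(x,y,z)$, so it remains only to identify the linear span of the elements $c(x,y,z)$ with $I$.

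The key computation is that $c(x,y,z)$ is itself a symmetrizer. Applying the Leibniz identity to $[z,[x,y]]$ and folding once more with the alternating relation gives
$$c(x,y,z)=[[x,y],z]+[z,[x,y]],$$
the symmetrizer of the pair $[x,y]$ and $z$; in particular $c(x,y,z)\in I$. For the reverse inclusion I would use that $\Ulb$, being a universal central extension of a perfect algebra, is itself perfect: writing an arbitrary $u\in\Ulb$ as a sum of brackets $u=\sum_i[p_i,q_i]$ and expanding a generic symmetrizer as $[u,v]+[v,u]=\sum_i c(p_i,q_i,v)$ exhibits every generator of $I$ as a combination of the $c(x,y,z)$. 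Hence the span of the $c(x,y,z)$ is exactly $I$, and since every generator of $J$ equals $2\,c(x,y,z)$ we obtain $J=2I$.

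The two Leibniz expansions are routine bookkeeping; the step I would treat most carefully, and the only one with genuine content, is showing that every symmetrizer lies in the span of the $c(x,y,z)$, which really uses perfectness of $\Ulb$ rather than just the centrality of $\Klb$. I would also be explicit about the meaning of $2I$: when $\chr F\ne2$ multiplication by $2$ is invertible, so $2I=I$ and the lemma asserts that the Jacobi defect and the antisymmetry defect coincide, whereas when $\chr F=2$ one has $2I=0$, so the content is that $J$ vanishes --- which is exactly the dichotomy that will be needed in the proof of Theorem \ref{thmMain}.
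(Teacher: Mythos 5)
Your proof is correct and follows essentially the same route as the paper: use centrality of the symmetrizers to get $[[x,y],z]=-[[y,x],z]$, expand the Jacobiator via the Leibniz identity to exhibit each generator of $J$ as twice a symmetrizer, and invoke perfectness of $\Ulb$ for the reverse inclusion. Your version merely reorganizes the computation through the cyclic sum $c(x,y,z)$ and is in fact more explicit than the paper about why perfectness yields $2I\subseteq J$, which the paper dispatches in one sentence.
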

\begin{proof}
The elements of the form $[x,y]+[y,x]$, $x,y\in \Ulb$,
are in the center of $\Ulb$ since their image is $0$ in $\g$.
Consequently we have $[[x,y],z] = - [[y,x],z]$ for all $x,y,z\in \Ulb$.
Using this identity together with the Leibniz identity one can show the following
$$
\begin{array}{rll}
[x,[y,z]] + [y,[z,x]] + [z,[x,y]] =& [x,[y,z]] + [[y,z],x] - [[y,x],z] + [z,[x,y]]              \\
                                  =& [x,[y,z]] + [[y,z],x] + [[x,y],z] + [z,[x,y]]              \\
                                  =& [x,[y,z]] + [[y,z],x] + [[x,z],y] + [x,[y,z]] + [z,[x,y]]  \\
                                  =& 2 [x,[y,z]] + [[y,z],x] + [[x,z],y] + [z,[x,y]]            \\
                                  =& 2 [x,[y,z]] + [[y,z],x] - [[z,x],y] + [z,[x,y]]            \\
                                  =& 2 [x,[y,z]] + [[y,z],x] - [[z,y],x]                        \\
                                  =&  2 [x,[y,z]] + [[y,z],x] + [[y,z],x]                       \\
                                  =& 2 [x,[y,z]] + 2 [[y,z],x]                                  \\
                                  =& 2 ( [x,[y,z]] + [[y,z],x] ).                                \\
\end{array}
$$
This proves the lemma since $\Ulb$ is a perfect Leibniz algebra.
\end{proof}

\begin{cor}
We have
$$ J=0, \ \ \ if \ \chr F = 2, $$
$$ J=I, \ \ \ if \ \chr F \ne 2, $$
where $F$ is the ground field.
\end{cor}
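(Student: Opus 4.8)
The plan is to read off both statements directly from Lemma \ref{lem_j=2i}, which asserts the equality of subspaces $J = 2I$ inside $\Ulb$. The only point requiring care is the interpretation of the symbol $2I$: it denotes the subspace consisting of all elements $2v$ with $v\in I$, that is, the image of $I$ under multiplication by the scalar $2\in F$. With this reading fixed, both cases of the corollary fall out by examining how the scalar $2$ behaves in the ground field.

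First I would treat the case $\chr F = 2$. Here $2 = 0$ in $F$, so every element $2v$ with $v\in I$ is the zero vector, and hence $2I = 0$. Combining this with the identity $J = 2I$ of Lemma \ref{lem_j=2i} yields $J = 0$, which is the first assertion.

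Next I would treat the case $\chr F \ne 2$. Now $2 \ne 0$ in $F$, so $2$ is a unit and multiplication by $2$ is an $F$-linear automorphism of $\Ulb$. Such an automorphism carries the subspace $I$ bijectively onto itself, so $2I = I$. Invoking $J = 2I$ once more gives $J = I$, which is the second assertion.

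I expect no genuine obstacle here, since the whole computational content has already been isolated in Lemma \ref{lem_j=2i}; what remains is only the elementary observation that scaling a vector space by $2$ annihilates it when $\chr F = 2$ and is invertible otherwise. The single subtlety worth stating explicitly is the convention just noted, namely that $2I$ means the scalar multiple of the subspace and not the sum $I + I$ (which would collapse to $I$ in every characteristic and would make the case $\chr F = 2$ false).
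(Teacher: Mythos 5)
Your proof is correct and is exactly the argument the paper intends (the corollary is stated without proof as an immediate consequence of Lemma \ref{lem_j=2i}): $2I=0$ when $\chr F=2$ and $2I=I$ when $2$ is invertible. Your explicit remark on the reading of $2I$ as the scalar multiple of the subspace is a sensible clarification and does not change the argument.
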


By the universal property of (\ref{equUCELeibniz}) and considering
(\ref{equUCELTS}) as a central extension of Leibniz algebras we
get a map $\Ulb\to \Ults$. Similarly we have maps $\Ulb\to \Ulie$ and $\Ults\to \Ulie$.
Again by the universal property we can assemble these maps into a
commutative diagram
$$  \Ulb\longrightarrow \Ults  $$
$$ \searrow\ \ \swarrow $$
$$    \Ulie.    $$
Let $I'$ be the image of $I$ in $\Ults$.

\begin{prop}
With the above notations we have the following isomorphisms
$$\Ults\cong \Ulb/J$$
$$\Ulie\cong \Ulb/I, \ \ \ if \ \chr F \ne 2, $$
$$\Ulie\cong \Ults/I', \ \ \ if \ \chr F \ne 2, $$
where $F$ is the ground field.
In particular
$$  0\to \Klb/J\to \Ulb/J \to \g \to 0 $$
is the universal central extension of the Lie triple system $\g$.
\end{prop}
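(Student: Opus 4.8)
The plan is to prove each isomorphism by producing morphisms of central extensions in both directions and then invoking the rigidity of universal central extensions. Two standard facts will do the work: first, the universal central extension of a perfect object admits a \emph{unique} morphism over $\g$ into any central extension; second, when the target is itself a perfect central extension of $\g$, that canonical morphism is automatically surjective, since its image surjects onto $\g$ and, the kernel being central, is closed under the bracket, hence exhausts the perfect total space. I would combine these with Lemma \ref{LemmaLeibnStr} and Remark \ref{remLeibnStr}.

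For $\Ults\cong\Ulb/J$ I would argue as follows. By Lemma \ref{LemmaLeibnStr} the Lie triple system $\Ults$ carries a Leibniz bracket recovering its ternary bracket, and by Remark \ref{remLeibnStr} this Leibniz bracket satisfies the Jacobi identity; regarding $\Ults$ as a central extension of $\g$ in the Leibniz category, the universal property of (\ref{equUCELeibniz}) gives a Leibniz map $\phi\colon\Ulb\to\Ults$ over $\g$. Since $\phi$ sends each generator $[x,[y,z]]+[z,[x,y]]+[y,[z,x]]$ of $J$ to the corresponding Jacobi expression in $\Ults$, which vanishes, $\phi$ factors as $\bar\phi\colon\Ulb/J\to\Ults$. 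In the other direction, $\Ulb/J$ is a Leibniz algebra in which the Jacobi identity now holds, so by the construction of Section \ref{SectPrelim} it becomes a Lie triple system under $\{x,y,z\}=[x,[y,z]]$; the projection $\Ulb/J\to\g$ is then a central extension of Lie triple systems with kernel $\Klb/J$ (Leibniz-centrality of the kernel makes every ternary bracket meeting it vanish), and $\Ulb/J$ is perfect because $\Ulb$ is. The universal property of (\ref{equUCELTS}) yields an LTS map $\psi\colon\Ults\to\Ulb/J$. Then $\bar\phi\circ\psi$ is an LTS endomorphism of the universal object $\Ults$ over $\g$, hence the identity, so $\psi$ is injective; and $\psi$ is surjective by the second standard fact. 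Thus $\psi$ is an isomorphism, which is exactly the ``In particular'' exact sequence $0\to\Klb/J\to\Ulb/J\to\g\to0$.

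The isomorphism $\Ulie\cong\Ulb/I$ (for $\chr F\ne2$) would be proved the same way, with ``Lie algebra'' in place of ``Lie triple system'' and $I$ in place of $J$. The only new point is that $\Ulb/I$ is a genuine Lie algebra precisely when $2$ is invertible: putting $y=x$ in the relation $[x,y]+[y,x]\in I$ gives $2[x,x]\in I$, whence $[x,x]\in I$ when $\chr F\ne2$, which is exactly where the hypothesis is used. Granting this, $\Ulb/I$ is a perfect central Lie extension of $\g$, and the universal properties of (\ref{equUCELie}) and (\ref{equUCELeibniz}) produce maps $\Ulie\to\Ulb/I$ and $\Ulb/I\to\Ulie$ that are mutually inverse by the same uniqueness-and-surjectivity argument.

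Finally, $\Ulie\cong\Ults/I'$ (for $\chr F\ne2$) should follow formally. By Lemma \ref{lem_j=2i}, in characteristic $\ne2$ we have $J=2I=I$, so $\Ults\cong\Ulb/J=\Ulb/I\cong\Ulie$; moreover $I'$ is the image of $I$ in $\Ults$ under the map $\Ulb\to\Ults$, and since that map factors through $\Ulb/J=\Ulb/I$ where $I$ dies, we get $I'=0$ and hence $\Ults/I'\cong\Ults\cong\Ulie$. The main obstacle I anticipate is the first isomorphism, and inside it the one genuinely non-formal step is checking that $\Ulb/J$ is a bona fide \emph{central} extension of $\g$ in the Lie triple system category, which is what makes the universal property of $\Ults$ applicable; everything after that is forced by the rigidity of universal central extensions.
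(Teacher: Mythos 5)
Your proposal is correct and follows essentially the same route as the paper: use Lemma \ref{LemmaLeibnStr} to view $\Ults$ as a central Leibniz extension of $\g$, factor the resulting map $\Ulb\to\Ults$ through $\Ulb/J$, equip $\Ulb/J$ with the induced Lie triple system structure to obtain a map back, and conclude by the universal properties. The paper proves only the first isomorphism and leaves the other two as an exercise, so your additional detail (the injectivity-plus-surjectivity argument via perfectness, the $\chr F\ne 2$ point for $[x,x]\in I$, and the reduction $I'=0$ from $J=I$) simply fills in what the paper omits.
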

\begin{proof}
We are going to prove the first isomorphism, the other two can be
proved similarly and are left to the reader as an exercise.

By Lemma \ref{LemmaLeibnStr} the extension (\ref{equUCELTS}) can
be considered as a central extension of the Leibniz algebra $\g$.
This implies existence of the unique map $\Ulb \to \Ults$ over $\g$. Moreover, the
elements of the form (\ref{equJacobiElmts}) are in the kernel of
this map. Recall that $J$ is an ideal of $\Ulb$. So, the quotient
$\Ulb/J$ is a Leibniz algebra and the map $\Ulb \to \Ults$ can be factored
as the composition $\Ulb \to \Ulb/J \to \Ults$. On the other hand the
ternary bracket on $\Ulb/J$ defined by the formula
$\{a,b,c\}=[a,[b,c]]$ gives a Lie triple system structure and the
extension
$$  0\to \Klb/J\to \Ulb/J \to \g \to 0 $$
can be considered as a central extension of the Lie triple system
$\g$. So, we have the unique map $\Ults \to \Ulb/J$. By the universal
properties we can conclude that the maps $\Ulb/J \to \Ults$ and
$\Ults \to \Ulb/J$ are inverse to each other. So, we have $\Ults\cong \Ulb/J$.
\end{proof}

Combining the proposition above with Lemma \ref{lem_j=2i}
one immediately gets Theorem \ref{thmMain}.

\vskip3cm

\end{document}